\definecolor{webgreen}{rgb}{0,.5,0}
\definecolor{webbrown}{rgb}{.6,0,0}
\tikzset{circle node/.style = {circle,inner sep=1pt,draw, fill=white},
        X node/.style = {fill=white, inner sep=1pt},
        dot node/.style = {circle, draw, inner sep=5pt}
        }
\newtheorem{theorem}{Theorem}
\newtheorem{proposition}[theorem]{Proposition}
\newtheorem{corollary}[theorem]{Corollary}
\theoremstyle{definition}
\newtheorem{example}[theorem]{Example}
\begin{document}

\begin{center}
\vskip 1cm{\LARGE\bf On pseudo-involutions, involutions and quasi-involutions in the group of almost Riordan arrays} \vskip 1cm \large
Paul Barry, Nikolaos Pantelidis\\
School of Science\\
Waterford Institute of Technology\\
Ireland\\
\href{mailto:pbarry@wit.ie}{\tt pbarry@wit.ie}\\
\href{mailto:nikolaospantelidis@gmail.com}{\tt nikolaospantelidis@gmail.com}
\end{center}
\vskip .2 in

\begin{abstract} \noindent The group of almost Riordan arrays contains the group of Riordan arrays as a subgroup. In this note, we exhibit examples of pseudo-involutions, involutions and quasi-involutions in the group of almost Riordan arrays \end{abstract}

\section{Introduction}

Since the initial paper \cite{SGWW} defining the Riordan group, there has been interest in studying involutions, pseudo-involutions and quasi-involutions \cite{Cameron, CKS, quasi, Jean_Louis} associated to this group. In this note, we take a look at involutions, pseudo-involutions and quasi-involutions associated to the related group of almost Riordan arrays \cite{almost}.

In this first section, we recall the definition of the Riordan group, and in the next section, we recall the definition of the group of almost Riordan arrays (of order $1$, initially). We then proceed to look at involutions, pseudo-involutions and quasi-involutions associated this group.

We define $\mathcal{F}_r$ to be the set of formal power series of order $r$, where
$$\mathcal{F}_r = \{ a_r x^r+a_{r+1}x^{r+1}+ a_{r+2}x^{r+2}+\ldots | a_i \in R\}$$
 where $R$ is a suitable ring with unit (which we shall denote by $1$). In the sequel, we shall take $R=\mathbb{Z}$.
The Riordan group over $R$ is then given by the semi-direct product
$$\mathcal{R}=\mathcal{F}_0 \rtimes \mathcal{F}_1.$$
To an element $(g, f) \in \mathcal{R}$ we associate the $R$-matrix with $(n,k)$-th element
$$T_{n,k} = [x^n] g(x) f(x)^k.$$
This is an invertible lower-triangular matrix. We shall also use the notation $(g(x), f(x))$ to represent this matrix.
\begin{example} The Riordan array $\left(\frac{1}{1-x}, \frac{x}{1-x}\right)$ has associated matrix equal to the binomial matrix (Pascal's triangle) $B=\left(\binom{n}{k}\right)$. We have
\begin{align*}
T_{n,k}&=[x^n] \frac{1}{1-x}\frac{x^k}{(1-x)^k}\\
&=[x^{n-k}] \frac{1}{(1-x)^{k+1}}\\
&=[x^{n-k}] \binom{-(k+1)}{j}(-1)^j x^j\\
&=[x^{n-k}] \binom{k+1+j-1}{j} x^j\\
&=\binom{k+n-k}{n-k}\\
&=\binom{n}{k}.
\end{align*}
\end{example}

We shall confine ourselves to the subgroup
$$\mathcal{R}^{(1)}=\mathcal{F}_0^{(1)} \rtimes \mathcal{F}_1^{(1)},$$
where $$g(x) \in \mathcal{F}_0^{(1)} \leftrightarrow g_0=1,$$ and
$$f(x) \in \mathcal{F}_1^{(1)} \leftrightarrow f_1=1.$$
This ensures that all elements of $(g,f)^{-1}$ are in $R$, where
$$(g,f)^{-1}=\left(\frac{1}{g(\bar{f})}, \bar{f}\right).$$ Here, $\bar{f}(x)$ is the compositional inverse of $f(x)$.
The multiplication in the Riordan group is specified by
$$ (g(x), f(x)) \cdot (u(x), v(x)) = (g(x) u(f(x)), v(f(x))).$$ The element $I=(1,x)$ is the identity for multiplication. The fundamental theorem of Riordan arrays says that if $h(x) \in R[[x]]$, then
$$(g, f) h(x) = g(x)h(f(x)).$$

\subsection{Almost Riordan arrays}
We have shown \cite{almost} that if we define
$$a\mathcal{R}(1)=\{(a; g, f) | a \in \mathcal{F}_0, (g, f) \in \mathcal{R}\},$$
then $a\mathcal{R}(1)$ is also a group, called the group of almost Riordan arrays (of order $1$). In this case $a(x)$ is the generating function of the initial column of the almost-Riordan array. Such a matrix then begins
$$\left(
\begin{array}{cccc}
 a_0 & 0 & 0 & 0 \\
 a_1 & g_0 & 0 & 0 \\
 a_2 & g_1 & g_0f_1 & 0 \\
 a_3 & g_2 & g_0f_2+g_1 f_1 & 1 \\
\end{array}
\right).$$
Here, the Riordan array $(g(x), f(x))$ begins
$$\left(
\begin{array}{ccc}
 g_0 & 0 & 0 \\
 g_1 & g_0f_1 & 0 \\
 g_2 & g_0f_2+g_1f_1 & 1 \\
\end{array}
\right).$$
Elements of $a\mathcal{R}(1)$ are called almost Riordan arrays of the first order.

We recall that the product in the group of almost Riordan arrays (of first order) is given by
$$(a; g, f)\cdot (b; u,v) = ((a;g,f)b; g u(f), v(f)),$$ where
the fundamental theorem of almost Riordan arrays tells us that
$$(a; g, f) h(x) = h_0 a+ x g \tilde{h}(f),$$ with
$$\tilde{h}(x)=\frac{h(x)-h_0}{x}.$$
The inverse in $a\mathcal{R}(1)$ is given by
$$(a; g, f)^{-1}= \left(a^*; \frac{1}{g(\bar{f})}, \bar{f}\right),$$ where
$$a^*(x)  = (1; -g, f)^{-1}a(x),$$ with
$$(1; g, f)^{-1}=\left(1; \frac{1}{g(\bar{f})}, \bar{f}\right).$$
To the almost Riordan array $(a; g, f)$ we associate the matrix $M$ with
\begin{align*}
M_{n,k}&=[x^{n-1}] g f^{k-1},\quad n,k \ge 1,\\
M_{n,0}&=a_n,\\
M_{0,k}&=a_0 0^k. \end{align*}

\noindent We can identity the normal subgroup of elements of the form $(a;1,x)$ with  $\mathcal{F}_0$, and hence we have
$$a\mathcal{R}(1)=\mathcal{F}_0 \rtimes \mathcal{R}.$$ For instance, we have
$$\left(
\begin{array}{ccccccc}
 1 & 0 & 0 & 0 & 0 & 0 & 0 \\
 a_1 & 1 & 0 & 0 & 0 & 0 & 0 \\
 a_2 & 0 & 1 & 0 & 0 & 0 & 0 \\
 a_3 & 0 & 0 & 1 & 0 & 0 & 0 \\
 a_4 & 0 & 0 & 0 & 1 & 0 & 0 \\
 a_5 & 0 & 0 & 0 & 0 & 1 & 0 \\
 a_6 & 0 & 0 & 0 & 0 & 0 & 1 \\
\end{array}
\right)\cdot \left(
\begin{array}{ccccccc}
 1 & 0 & 0 & 0 & 0 & 0 & 0 \\
 0 & 1 & 0 & 0 & 0 & 0 & 0 \\
 0 & 2 & 1 & 0 & 0 & 0 & 0 \\
 0 & 3 & 3 & 1 & 0 & 0 & 0 \\
 0 & 4 & 6 & 4 & 1 & 0 & 0 \\
 0 & 5 & 10 & 10 & 5 & 1 & 0 \\
 0 & 6 & 15 & 20 & 15 & 6 & 1 \\
\end{array}
\right)$$
$$=\left(
\begin{array}{ccccccc}
 1 & 0 & 0 & 0 & 0 & 0 & 0 \\
 a_1 & 1 & 0 & 0 & 0 & 0 & 0 \\
 a_2 & 2 & 1 & 0 & 0 & 0 & 0 \\
 a_3 & 3 & 3 & 1 & 0 & 0 & 0 \\
 a_4 & 4 & 6 & 4 & 1 & 0 & 0 \\
 a_5 & 5 & 10 & 10 & 5 & 1 & 0 \\
 a_6 & 6 & 15 & 20 & 15 & 6 & 1 \\
\end{array}
\right).$$
Whilst all matrices in this note are of infinite extent for $n,k \ge 0$ (except for the last section) we display only indicative truncations.

\section{Pseudo-involutions in the group of almost Riordan arrays of first order}
We let $\bar{I}=(1,-x)$. We have $\bar{I}^2=I$.  We say that $(g, f) \in \mathcal{R}$ is an pseudo-involution in the Riordan group if $((g,f)\bar{I})^2=I$. The matrix corresponding to $\bar{I}$ is the diagonal matrix with diagonal $(1,-1,1,-1,\ldots)$.
We shall say that $(a; g,f) \in a\mathcal{R}(1)$ is a pseudo-involution in the group of almost Riordan arrays (of order 1) if $(M\bar{I})^2=I$, where $M$ is the matrix associated to $(a; g, f)$. Equivalently we require that $\bar{I}M\bar{I}=M^{-1}$.
We then have the following proposition.
\begin{proposition} The almost Riordan matrix $\left(\frac{1+x(r-1)}{1-x}; \frac{1}{(1-x)^2}, \frac{x}{1-x}\right)$ is a pseudo-involution in the group of almost Riordan arrays.
\end{proposition}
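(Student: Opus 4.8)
The plan is to reformulate the defining condition as an identity in the group $a\mathcal{R}(1)$ and then establish it by two short structural computations. Set $P=\left(\frac{1+x(r-1)}{1-x};\frac{1}{(1-x)^2},\frac{x}{1-x}\right)$. First note that the sign matrix $\bar I$, the diagonal matrix with diagonal $(1,-1,1,-1,\ldots)$, is the matrix associated to the almost Riordan array $(1;-1,-x)$: for $n,k\ge 1$ one has $[x^{n-1}](-1)(-x)^{k-1}=(-1)^k[x^{n-1}]x^{k-1}$, which equals $(-1)^n$ when $n=k$ and $0$ otherwise, while the entries in row $0$ and column $0$ coincide with those of the identity. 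Since the matrix representation of $a\mathcal{R}(1)$ is faithful and turns the group product into ordinary matrix multiplication (as the example in the Introduction illustrates), the criterion $\bar I M\bar I=M^{-1}$ is equivalent to the group identity $(1;-1,-x)\cdot P\cdot(1;-1,-x)=P^{-1}$. Writing $\bar I=(1;-1,-x)$ from now on, it therefore suffices to show $\bar I\,P\,\bar I=P^{-1}$.

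To compute the left-hand side I would use the product rule $(a;g,f)\cdot(b;u,v)=\bigl((a;g,f)b;\,g\,u(f),\,v(f)\bigr)$ together with the fundamental theorem $(a;g,f)h=h_0a+xg\,\tilde h(f)$, where $\tilde h(x)=(h(x)-h_0)/x$. Writing $a(x)=\frac{1+x(r-1)}{1-x}=1+\frac{rx}{1-x}$, so that $a_0=1$ and $\tilde a(x)=\frac{r}{1-x}$, the first multiplication yields
$$\bar I\,P=\left(1-x\,\tilde a(-x);\,-g(-x),\,f(-x)\right)=\left(\frac{1+x(1-r)}{1+x};\,\frac{-1}{(1+x)^2},\,\frac{-x}{1+x}\right).$$
Multiplying this on the right by $\bar I=(1;-1,-x)$ leaves the first component unchanged (an almost Riordan array applied to the constant series $1$ returns its own first component), sends the second component to $\frac{-1}{(1+x)^2}\cdot(-1)=\frac{1}{(1+x)^2}$, and sends the third to $-\bigl(\frac{-x}{1+x}\bigr)=\frac{x}{1+x}$; hence
$$\bar I\,P\,\bar I=\left(\frac{1+x(1-r)}{1+x};\,\frac{1}{(1+x)^2},\,\frac{x}{1+x}\right).$$

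For the right-hand side I would apply $(a;g,f)^{-1}=\left(a^{*};\frac{1}{g(\bar f)},\bar f\right)$ with $a^{*}(x)=(1;-g,f)^{-1}a(x)$ and $(1;g,f)^{-1}=\left(1;\frac{1}{g(\bar f)},\bar f\right)$. Here $\bar f(x)=\frac{x}{1+x}$ and $g(\bar f)=(1+x)^2$, so the second and third components of $P^{-1}$ are $\frac{1}{(1+x)^2}$ and $\frac{x}{1+x}$, already matching $\bar I\,P\,\bar I$. It remains to evaluate $a^{*}=\left(1;\frac{-1}{(1+x)^2},\frac{x}{1+x}\right)a$ via the fundamental theorem: since $\tilde a\!\left(\frac{x}{1+x}\right)=r(1+x)$, one obtains $a^{*}=1-\frac{rx}{1+x}=\frac{1+x(1-r)}{1+x}$. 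Therefore $P^{-1}=\bar I\,P\,\bar I$, which is the assertion of the proposition.

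The only real obstacle is keeping the bookkeeping straight: the product, operator-action and inversion formulas for almost Riordan arrays are somewhat intricate, and the first ($a$-)component is exactly where an error in sign or in a substitution would go unnoticed, so each of the three uses of the fundamental theorem should be carried out carefully. Alternatively, one may argue entirely at the level of matrices: the matrix $M$ of $P$ has $M_{n,k}=\binom{n}{k}$ for $n,k\ge 1$ (the same power-series extraction as in the Pascal-triangle example), $M_{n,0}=r$ for $n\ge 1$, and $M_{0,0}=1$, $M_{0,k}=0$ for $k\ge 1$; putting $N=M\bar I$, so that $N_{n,k}=(-1)^kM_{n,k}$, the zeroth row of $N$ is $(1,0,0,\ldots)$ and hence so is that of $N^2$, for $n\ge 1$ the $(n,0)$ entry of $N^2$ equals $r\sum_{j=0}^{n}(-1)^j\binom{n}{j}=r\cdot 0^{n}=0$, and for $1\le k\le n$ the $(n,k)$ entry equals $(-1)^k\sum_{j=k}^{n}(-1)^j\binom{n}{j}\binom{j}{k}$, which collapses to $\binom{n}{k}\cdot 0^{n-k}$ using $\binom{n}{j}\binom{j}{k}=\binom{n}{k}\binom{n-k}{j-k}$ together with $\sum_i(-1)^i\binom{m}{i}=0^{m}$; hence $N^2=I$ and $P$ is a pseudo-involution.
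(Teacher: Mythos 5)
Your proposal is correct, and its computational heart coincides with the paper's proof: the decisive step in both is evaluating $a^{*}=\left(1;-\tfrac{1}{(1+x)^2},\tfrac{x}{1+x}\right)\tfrac{1+x(r-1)}{1-x}=1-\tfrac{rx}{1+x}$ via the inverse formula and the fundamental theorem, which is exactly the paper's calculation. Where you diverge is in how the rest of the claim is handled. The paper disposes of the ``interior'' entries by simply quoting that $\left(\tfrac{1}{(1-x)^2},\tfrac{x}{1-x}\right)$ is a known pseudo-involution in the Riordan group, whereas you recast the whole condition as the group identity $\bar I\,P\,\bar I=P^{-1}$ with $\bar I=(1;-1,-x)\in a\mathcal{R}(1)$ and compute both sides of that identity in full; this is more self-contained (it never appeals to the Riordan-group fact, and it makes explicit the small but worthwhile observation that $\bar I$ itself lives in $a\mathcal{R}(1)$), at the cost of two extra applications of the product rule, and your computations there check out. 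Your closing alternative --- writing $M_{n,k}=\binom{n}{k}$ for $n,k\ge 1$, $M_{n,0}=r$, and showing $(M\bar I)^2=I$ directly from $\binom{n}{j}\binom{j}{k}=\binom{n}{k}\binom{n-k}{j-k}$ and the alternating-sum identity --- is a genuinely different, purely elementary verification that bypasses the almost-Riordan formalism altogether; it is specific to this family (it would not generalize as readily as the generating-function argument to, say, the powers $A_r^p$ treated next in the paper), but it independently confirms both the interior and the first-column behaviour, which is a nice cross-check.
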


We note that the corresponding matrix $M$ coincides with the binomial matrix $\left(\frac{1}{1-x}, \frac{x}{1-x}\right)$, except that the initial column $(1,1,1,\ldots)^T$ has been replaced by $(1,r,r,r,\ldots)^T$. For $r \ne 1$ this is not a Riordan matrix, but an almost Riordan matrix. It begins
$$\left(
\begin{array}{ccccccc}
 1 & 0 & 0 & 0 & 0 & 0 & 0 \\
 r & 1 & 0 & 0 & 0 & 0 & 0 \\
 r & 2 & 1 & 0 & 0 & 0 & 0 \\
 r & 3 & 3 & 1 & 0 & 0 & 0 \\
 r & 4 & 6 & 4 & 1 & 0 & 0 \\
 r & 5 & 10 & 10 & 5 & 1 & 0 \\
 r & 6 & 15 & 20 & 15 & 6 & 1 \\
\end{array}
\right).$$
\begin{proof}
We must show that the initial column of the inverse matrix is $(1,-r,r,-r,\ldots)^T$, with generating function $1-\frac{r}{1+x}$. The ``interior'' elements are taken care of by the fact that the Riordan array $\left(\frac{1}{(1-x)^2},\frac{x}{1-x}\right)$ is a pseudo-involution in the Riordan group.
Now since
$$(a; g, f)^{-1} = \left(a^*; \frac{1}{g(\bar{f})}, \bar{f}\right),$$ we must calculate
$$a^*(x)  = (1; -g, f)^{-1}a(x),$$ where
$$a=\frac{1+x(r-1)}{1-x},\quad g(x)=\frac{1}{(1-x)^2}, \quad \text{and}\,f(x)=\frac{x}{1-x}.$$
We have
\begin{align*}
a^* &=(1; -g, f)^{-1} a(x)\\
&=\left(1; -\frac{1}{g(\bar{f})}, \bar{f}\right) a(x)\\
&=\left(1; -\frac{1}{(1+x)^2}, \frac{x}{1+x}\right) \frac{1+x(r-1)}{1-x}\\
&=a_0.1+x \left(\frac{-1}{(1+x)^2}\right)\frac{r}{1-\frac{x}{1+x}}\\
&=1-\frac{x}{(1+x)^2} \frac{r(1+x)}{1+x-x}\\
&=1-\frac{rx}{1+x}.
\end{align*}
\end{proof}
The inverse matrix thus begins
$$\left(
\begin{array}{ccccccc}
 1 & 0 & 0 & 0 & 0 & 0 & 0 \\
 -r & 1 & 0 & 0 & 0 & 0 & 0 \\
 r & -2 & 1 & 0 & 0 & 0 & 0 \\
 -r & 3 & -3 & 1 & 0 & 0 & 0 \\
 r & -4 & 6 & -4 & 1 & 0 & 0 \\
 -r & 5 & -10 & 10 & -5 & 1 & 0 \\
 r & -6 & 15 & -20 & 15 & -6 & 1 \\
\end{array}
\right).$$

We shall use the notation $A_r=\left(\frac{1+x(r-1)}{1-x}; \frac{1}{(1-x)^2}, \frac{x}{1-x}\right)$.
\begin{proposition} We have
$$A_r^p= \left(\frac{1+pr(x-1)}{1-px}; \frac{1}{(1-px)^2}, \frac{x}{1-px}\right).$$
\end{proposition}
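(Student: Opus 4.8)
The plan is to argue by induction on $p$, the case $p=1$ being the definition of $A_r$. For the inductive step I would write $A_r^{p+1} = A_r^p \cdot A_r$ and feed the inductive hypothesis
$$A_r^p = \left(\alpha_p;\ \frac{1}{(1-px)^2},\ \frac{x}{1-px}\right), \qquad \alpha_p(x) = 1 + \frac{prx}{1-px},$$
into the group law $(a;g,f)\cdot(b;u,v) = \bigl((a;g,f)b;\ g\,u(f),\ v(f)\bigr)$. Three components have to be identified, and two of them are essentially automatic.

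The $(g,f)$-part of $A_r^{p+1}$ comes out to $\left(\frac{1}{(1-(p+1)x)^2}, \frac{x}{1-(p+1)x}\right)$, which is just the statement that the Riordan array $\left(\frac{1}{(1-x)^2}, \frac{x}{1-x}\right)$ has the stated powers: composing $\frac{x}{1-x}$ with $f_p = \frac{x}{1-px}$ gives $\frac{x}{1-(p+1)x}$ because $1 - f_p = \frac{1-(p+1)x}{1-px}$, while the new first component $g_p\cdot g(f_p)$ telescopes, since $g(f_p) = (1-f_p)^{-2} = \frac{(1-px)^2}{(1-(p+1)x)^2}$ cancels the $(1-px)^{-2}$ in $g_p$. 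I would dispose of this in a line or two.

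The substantive point is the initial column, i.e. the power series $(\alpha_p; g_p, f_p)\,a(x)$ where $a(x) = \frac{1+x(r-1)}{1-x}$ is the first component of $A_r$. Here I would invoke the fundamental theorem of almost Riordan arrays: $a(0)=1$ and $\tilde a(x) = \frac{a(x)-1}{x} = \frac{r}{1-x}$, so
$$(\alpha_p; g_p, f_p)\,a(x) = \alpha_p(x) + x\,g_p(x)\,\tilde a\!\left(f_p(x)\right) = \alpha_p(x) + \frac{rx}{(1-px)\bigl(1-(p+1)x\bigr)}.$$
Substituting $\alpha_p(x) = 1 + \frac{prx}{1-px}$ and combining the two rational terms over the common denominator $(1-px)(1-(p+1)x)$, the numerator becomes $rx\bigl(p(1-(p+1)x)+1\bigr) = rx\,(p+1)(1-px)$; the factor $1-px$ cancels, leaving $1 + \frac{(p+1)rx}{1-(p+1)x} = \alpha_{p+1}(x)$, which closes the induction. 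This final collapse --- at bottom the identity $p(1-(p+1)x)+1 = (p+1)(1-px)$ --- is the one place where a little care is needed and the step I would expect a reader to want spelled out; everything else is bookkeeping. (One could equally well organize the whole argument through the semidirect-product description $a\mathcal{R}(1) = \mathcal{F}_0 \rtimes \mathcal{R}$, where the $\mathcal{F}_0$-part of $A_r^p$ appears as a telescoping product built from $a$ and the powers of $(g,f)$, but it reduces to the same computation.)
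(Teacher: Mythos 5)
Your induction is correct --- I checked the three components of the inductive step, and in particular the final collapse $p(1-(p+1)x)+1=(p+1)(1-px)$ that turns $\alpha_p(x)+\frac{rx}{(1-px)(1-(p+1)x)}$ into $\alpha_{p+1}(x)$ --- but there is nothing in the paper to compare it with: this proposition is stated without any proof, so your argument supplies a missing verification rather than an alternative to an existing one. Two remarks. First, what you actually prove is $\alpha_p(x)=1+\frac{prx}{1-px}=\frac{1+px(r-1)}{1-px}$, whereas the proposition as printed reads $\frac{1+pr(x-1)}{1-px}$; the printed expression is a typo (its constant term is $1-pr$, not $1$), as one sees both from the sentence following the proposition, which says the initial column is $(1,rp,rp^2,rp^3,\ldots)$, and from the proof of the next proposition (that $A_r^p$ is a pseudo-involution), which uses $\tilde{a}(x)=\frac{pr}{1-px}$. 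You have silently proved the intended statement; you should say explicitly that you are working with the corrected form. Second, your induction covers only $p\ge 1$ (with $p=0$ trivial), while the corollary that follows speaks of the subgroup generated by $A_r$, so negative exponents are implicitly in scope; the cheapest repair is the route you mention parenthetically: the very same computation shows $M_p\cdot M_q=M_{p+q}$ for $M_p=\left(1+\frac{prx}{1-px};\frac{1}{(1-px)^2},\frac{x}{1-px}\right)$, the key identity now being $p(1-(p+q)x)+q=(p+q)(1-px)$, so that $p\mapsto M_p$ is a homomorphism of $\mathbb{Z}$ into $a\mathcal{R}(1)$ sending $1$ to $A_r$; this gives all integer powers at once, and at $q=-p$ it recovers the inverse computed in the paper's proof of the pseudo-involution property.
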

Thus $A_r^p$ coincides with $B^p$, except the initial column $(1,p,p^2,p^3,\ldots)^T$ is replaced by $(1,rp,rp^2, rp^3,\ldots)$.
\begin{proposition} $A_r^p$ is a pseudo-involution in the group of almost Riordan matrices.
\end{proposition}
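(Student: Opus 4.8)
The plan is to reduce the statement to the fact, already established in the first proposition, that $A_r$ itself is a pseudo-involution, and to exploit the observation that ``being a pseudo-involution'' is preserved under taking powers. The key structural point is that the assignment $(a;g,f) \mapsto M$ is an injective group homomorphism from $a\mathcal{R}(1)$ into the group of invertible lower-triangular matrices (this is exactly the content of the identification $a\mathcal{R}(1) = \mathcal{F}_0 \rtimes \mathcal{R}$ recalled above), so the matrix attached to $A_r^p$ is precisely $M^p$, where $M$ is the matrix attached to $A_r$.

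First I would rewrite the pseudo-involution condition in its conjugated form: since $\bar{I}^2 = I$, the equation $(M\bar{I})^2 = I$ is equivalent to $\bar{I} M \bar{I} = M^{-1}$, and by the first proposition this holds for $M = M(A_r)$. Next, again because $\bar{I}^{-1} = \bar{I}$, the map $\Phi(X) = \bar{I} X \bar{I}$ is a group automorphism of the ambient matrix group; hence $\Phi(M^p) = \Phi(M)^p$, and substituting $\Phi(M) = M^{-1}$ yields $\bar{I} M^p \bar{I} = M^{-p} = (M^p)^{-1}$. Since $M^p$ is the matrix of $A_r^p$, this says exactly that $A_r^p$ is a pseudo-involution in $a\mathcal{R}(1)$. (The same argument applies verbatim when $p$ is a negative integer.)

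For a proof parallel to that of the first proposition, one can instead invoke the explicit description of $A_r^p$ from the preceding proposition: the ``interior'' Riordan array $\left(\frac{1}{(1-px)^2}, \frac{x}{1-px}\right)$ is a pseudo-involution in the Riordan group, which one checks directly by verifying that $((g,f)\bar{I})^2 = I$, i.e.\ $g(x)g(-f(x)) = 1$ and $-f(-f(x)) = x$ for $g = (1-px)^{-2}$ and $f = x/(1-px)$; then the initial column of $(A_r^p)^{-1}$ is obtained from $a^*(x) = (1;-g,f)^{-1} a(x)$ — the same manipulation as in the first proof, with $x$ rescaled by $p$ — and shown to be $(1,-rp,rp^2,-rp^3,\ldots)^T$, which is precisely $\bar{I}$ applied to the initial column of the matrix of $A_r^p$. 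Combined with the Riordan-group fact, this gives $\bar{I} M(A_r^p) \bar{I} = M(A_r^p)^{-1}$.

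There is essentially no analytic obstacle here. For the first (preferred) route the only point requiring care is the passage between the $(a;g,f)$ notation and the matrix $M$, together with the elementary but decisive fact $\bar{I}^2 = I$, which is what makes $\Phi$ a homomorphism and not merely a conjugation map; in fact the same reasoning shows more generally that powers (and inverses) of any pseudo-involution in $a\mathcal{R}(1)$ are again pseudo-involutions. For the alternative route the only mild obstacle is the power-series bookkeeping in evaluating $a^*(x)$ for $A_r^p$, which is routine.
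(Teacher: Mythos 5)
Your preferred argument is correct, but it is genuinely different from the paper's. The paper proves the proposition computationally: it takes the explicit closed form of $A_r^p$ from the preceding proposition and evaluates $a^*(x)=(1;-g,f)^{-1}a(x)$ by the fundamental theorem, obtaining $1-\frac{prx}{1+px}$, i.e.\ the sign-alternated initial column, with the interior columns handled (as in the $p=1$ case) by the fact that $\left(\frac{1}{(1-px)^2},\frac{x}{1-px}\right)$ is a Riordan pseudo-involution --- this is essentially your ``alternative route.'' Your main route instead observes that conjugation by $\bar{I}$ is an automorphism of the ambient matrix group (since $\bar{I}^2=I$), so $\bar{I}M^p\bar{I}=(\bar{I}M\bar{I})^p=(M^{-1})^p=(M^p)^{-1}$ once the $p=1$ case is known; combined with the fact that the matrix of $A_r^p$ is $M(A_r)^p$, this is a complete and clean proof. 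What it buys is generality: it shows at once that every integer power (including negative powers) of any pseudo-involution in $a\mathcal{R}(1)$ is again a pseudo-involution, which immediately yields the paper's subsequent corollary that $A_r$ generates a subgroup of pseudo-involutions, with no generating-function bookkeeping. What the paper's computation buys in exchange is the explicit form of the initial column of $(A_r^p)^{-1}$, and it remains valid if one reads $A_r^p$ as the one-parameter family given by the closed formula for non-integer $p$, a case your conjugation argument does not directly cover; if the proposition is intended only for integer $p$ (as ``generates a subgroup'' suggests), your argument suffices as stated.
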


\begin{proof} We wish to show that the initial column of $(A_r^p)^{-1}$ is given by $(1, -rp, rp^2, -rp^3,\ldots)$, with generating function $1-\frac{prx}{1+px}$.
We have in this case that
\begin{align*}
a^* &=(1; -\frac{1}{(1-px)^2}, \frac{x}{1-px})^{-1} \frac{1+pr(x-1)}{1-px}\\
&=\left(1; -\frac{1}{(1+px)^2}, \frac{x}{1+px}\right) \frac{1+pr(x-1)}{1-px}\\
&=a_0.1+x \left(\frac{-1}{(1+px)^2}\right)\frac{pr}{1-p\frac{x}{1+px}}\\
&=1-\frac{x}{(1+px)^2} \frac{pr(1+px)}{1+px-px}\\
&=1-\frac{prx}{1+px}.
\end{align*}
\end{proof}

\begin{corollary} For each $r \ne 1$, the matrix $A_r$ generates a subgroup of pseudo-involutions in the group of almost Riordan arrays of first order.
\end{corollary}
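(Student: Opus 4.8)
The plan is to show that the cyclic subgroup $\langle A_r\rangle=\{A_r^p : p\in\mathbb{Z}\}$ generated by $A_r$ inside $a\mathcal{R}(1)$ consists entirely of pseudo-involutions, and that it is a genuine (in fact infinite) subgroup when $r\neq 1$. The point worth flagging at the outset is that pseudo-involutions do \emph{not} form a subgroup of $a\mathcal{R}(1)$ in general, so the content of the corollary is precisely that the special cyclic family $\{A_r^p\}$ happens to be closed; this closure is what makes the statement true rather than vacuous.

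First I would record that $\langle A_r\rangle$ is automatically a subgroup of $a\mathcal{R}(1)$: the powers of any fixed group element are closed under multiplication, $A_r^p\cdot A_r^q=A_r^{p+q}$, and under inversion, $(A_r^p)^{-1}=A_r^{-p}$. If one wants this in coordinates, the identity $A_r^p\cdot A_r^q=A_r^{p+q}$ follows from the closed form $A_r^p=\left(\frac{1+pr(x-1)}{1-px};\frac{1}{(1-px)^2},\frac{x}{1-px}\right)$ together with the multiplication rule $(a;g,f)\cdot(b;u,v)=((a;g,f)b;\,g\,u(f),\,v(f))$: the Riordan part is $B^p$ and composes as $B^p\cdot B^q=B^{p+q}$, while the fundamental theorem of almost Riordan arrays handles the first column.

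Next I would check that every element of $\langle A_r\rangle$ is a pseudo-involution. For $p\geq 1$ this is exactly the preceding proposition. For $p=0$ we have $A_r^0=I=(1;1,x)$, and $(I\bar{I})^2=\bar{I}^2=I$, so the identity qualifies. For $p<0$, write $A_r^p=(A_r^{-p})^{-1}$; since $\bar{I}^2=I$, the defining condition $\bar{I}M\bar{I}=M^{-1}$ is equivalent to $\bar{I}M^{-1}\bar{I}=M$, so the inverse of a pseudo-involution is again a pseudo-involution, and $A_r^{-p}$ being one (by the proposition) yields that $A_r^p$ is one as well. Hence all of $\langle A_r\rangle$ consists of pseudo-involutions.

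Finally I would establish nontriviality: from the closed form, $A_r^p=I$ forces $p=0$ (the multiplier part $\frac{x}{1-px}=x$ already forces this, or compare $(2,2)$-entries), so $A_r$ has infinite order and $\langle A_r\rangle\cong\mathbb{Z}$, a genuine infinite subgroup; the hypothesis $r\neq 1$ moreover guarantees $A_r$ is not a Riordan array, so this subgroup lies properly outside $\mathcal{R}$. I do not expect a real obstacle here — the substantive work was done in the two preceding propositions — and the only step needing a word of care is the one already highlighted: verifying closure of $\{A_r^p\}$ under the group operations, which holds trivially because these are the powers of a single element, each of which is a pseudo-involution.
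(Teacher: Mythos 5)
Your proposal is correct and follows essentially the same route as the paper, which states the corollary without separate proof as an immediate consequence of the closed form for $A_r^p$ and the fact that each $A_r^p$ is a pseudo-involution. Your additional checks — the case $p=0$, the observation that $\bar{I}M\bar{I}=M^{-1}$ is equivalent to $\bar{I}M^{-1}\bar{I}=M$ so that negative powers are covered, and the infinite order of $A_r$ — are sensible completions of details the paper leaves implicit.
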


We note that we have
$$A_r^p \cdot A_s^q = \left(1+\frac{prx}{1-px}+sx\left(\frac{1}{1-(p+q)x}-\frac{1}{1-px}\right);\frac{1}{(1-(p+q)x)^2}, \frac{x}{1-(p+q)x}\right).$$
Thus $A_r^p \cdot A_s^q \ne A_s^q \cdot A_r^p$ in general.

We end this section by noting that if $(g,f)$ is a pseudo-involution in the Riordan group, then $(1; g, f)$ is a pseudo-involution in the group of almost Riordan arrays. We have
$$\left(
\begin{array}{cccccc}
 1 & 0 & 0 & 0 & 0 & 0 \\
 r & 1 & 0 & 0 & 0 & 0 \\
 r & 2 & 1 & 0 & 0 & 0 \\
 r & 3 & 3 & 1 & 0 & 0 \\
 r & 4 & 6 & 4 & 1 & 0 \\
 r & 5 & 10 & 10 & 5 & 1 \\
\end{array}
\right)=\left(
\begin{array}{cccccc}
 1 & 0 & 0 & 0 & 0 & 0 \\
 r & 1 & 0 & 0 & 0 & 0 \\
 r & 0 & 1 & 0 & 0 & 0 \\
 r & 0 & 0 & 1 & 0 & 0 \\
 r & 0 & 0 & 0 & 1 & 0 \\
 r & 0 & 0 & 0 & 0 & 1 \\
\end{array}
\right) \cdot \left(
\begin{array}{cccccc}
 1 & 0 & 0 & 0 & 0 & 0 \\
 0 & 1 & 0 & 0 & 0 & 0 \\
 0 & 2 & 1 & 0 & 0 & 0 \\
 0 & 3 & 3 & 1 & 0 & 0 \\
 0 & 4 & 6 & 4 & 1 & 0 \\
 0 & 5 & 10 & 10 & 5 & 1 \\
\end{array}
\right),$$ where the last matrix $\left(1; \frac{1}{(1-x)^2}, \frac{x}{1-x}\right)$ is a pseudo-involution.
Taking inverses, we obtain
$$\left(
\begin{array}{cccccc}
 1 & 0 & 0 & 0 & 0 & 0 \\
 -r & 1 & 0 & 0 & 0 & 0 \\
 r & -2 & 1 & 0 & 0 & 0 \\
 -r & 3 & -3 & 1 & 0 & 0 \\
 r & -4 & 6 & -4 & 1 & 0 \\
 -r & 5 & -10 & 10 & -5 & 1 \\
\end{array}
\right)=\left(
\begin{array}{cccccc}
 1 & 0 & 0 & 0 & 0 & 0 \\
 0 & 1 & 0 & 0 & 0 & 0 \\
 0 & -2 & 1 & 0 & 0 & 0 \\
 0 & 3 & -3 & 1 & 0 & 0 \\
 0 & -4 & 6 & -4 & 1 & 0 \\
 0 & 5 & -10 & 10 & -5 & 1 \\
\end{array}
\right) \cdot \left(
\begin{array}{cccccc}
 1 & 0 & 0 & 0 & 0 & 0 \\
 -r & 1 & 0 & 0 & 0 & 0 \\
 -r & 0 & 1 & 0 & 0 & 0 \\
 -r & 0 & 0 & 1 & 0 & 0 \\
 -r & 0 & 0 & 0 & 1 & 0 \\
 -r & 0 & 0 & 0 & 0 & 1 \\
\end{array}
\right).$$

\section{Pseudo-involutions in the group of almost Riordan arrays of the second order}
It is possible \cite{almost} to extend the definition of $a\mathcal{R}(1)$ to derive the group $a\mathcal{R}(2)$ of almost Riordan arrays of order $2$. The elements of this group are of the form
$$(a,b;g,f)$$ where $a_0=1$, $b_0=1$, $g_0=1$ and $f_0=0, f_1=1$. The fundamental theorem for this group states that
$$(a, b; g, f)h=h_0 a+ h_1 x b+x^2 g \tilde{\tilde{h}}(f),$$ where
$$\tilde{\tilde{h}} = \frac{h(x)-h_0-h_1x}{x^2}.$$

\noindent Following \cite{almost}, we can define the inverse of a $4$-tuple as follows.
\begin{equation}(a, b; g, f)^{-1}=\left(a^{**}, b^*; \frac{1}{g \circ \bar{f}}, \bar{f}\right),\end{equation}
where \begin{equation}b^*=(1; - g, f)^{-1}\cdot b = \left(1; -\frac{1}{g \circ \bar{f}}, \bar{f}\right)\cdot b,\end{equation}
and where
\begin{equation}a^{**} = (1, -b; -g, f)^{-1} \cdot a=\left(1, -b^*; -\frac{1}{g \circ \bar{f}}, \bar{f}\right) \cdot a.\end{equation}

\begin{example} We define an element of $a\mathcal{R}(2)$ by $\left(\frac{1+x}{1-x}, \frac{1+x}{1-x}; \frac{1}{(1-x)^2}, \frac{x}{1-x}\right)$. The corresponding matrix $M$ is then defined by
\begin{align*}
M_{n,k}&=[x^n] x^k \frac{1+x}{1-x}, \quad k < 2;\\
M_{n,k}&=[x^n] \frac{x^k}{(1-x)^k}, \quad k \ge 2.\end{align*}
Thus the matrix $M$ begins
$$\left(
\begin{array}{ccccccc}
 1 & 0 & 0 & 0 & 0 & 0 & 0 \\
 2 & 1 & 0 & 0 & 0 & 0 & 0 \\
 2 & 2 & 1 & 0 & 0 & 0 & 0 \\
 2 & 2 & 2 & 1 & 0 & 0 & 0 \\
 2 & 2 & 3 & 3 & 1 & 0 & 0 \\
 2 & 2 & 4 & 6 & 4 & 1 & 0 \\
 2 & 2 & 5 & 10 & 10 & 5 & 1 \\
\end{array}
\right).$$
The inverse of this matrix then begins
$$\left(
\begin{array}{ccccccc}
 1 & 0 & 0 & 0 & 0 & 0 & 0 \\
 -2 & 1 & 0 & 0 & 0 & 0 & 0 \\
 2 & -2 & 1 & 0 & 0 & 0 & 0 \\
 -2 & 2 & -2 & 1 & 0 & 0 & 0 \\
 2 & -2 & 3 & -3 & 1 & 0 & 0 \\
 -2 & 2 & -4 & 6 & -4 & 1 & 0 \\
 2 & -2 & 5 & -10 & 10 & -5 & 1 \\
\end{array}
\right).$$
Note that we have
$$\left(
\begin{array}{cccccc}
 1 & 0 & 0 & 0 & 0 & 0 \\
 2 & 1 & 0 & 0 & 0 & 0 \\
 2 & 2 & 1 & 0 & 0 & 0 \\
 2 & 2 & 0 & 1 & 0 & 0 \\
 2 & 2 & 0 & 0 & 1 & 0 \\
 2 & 2 & 0 & 0 & 0 & 1 \\
\end{array}
\right) \cdot \left(
\begin{array}{cccccc}
 1 & 0 & 0 & 0 & 0 & 0 \\
 0 & 1 & 0 & 0 & 0 & 0 \\
 0 & 0 & 1 & 0 & 0 & 0 \\
 0 & 0 & 2 & 1 & 0 & 0 \\
 0 & 0 & 3 & 3 & 1 & 0 \\
 0 & 0 & 4 & 6 & 4 & 1 \\
\end{array}
\right)= \left(
\begin{array}{cccccc}
 1 & 0 & 0 & 0 & 0 & 0 \\
 2 & 1 & 0 & 0 & 0 & 0 \\
 2 & 2 & 1 & 0 & 0 & 0 \\
 2 & 2 & 2 & 1 & 0 & 0 \\
 2 & 2 & 3 & 3 & 1 & 0 \\
 2 & 2 & 4 & 6 & 4 & 1 \\
\end{array}
\right).$$
\end{example}
We have
\begin{proposition} The element $\left(\frac{1+x}{1-x}, \frac{1+x}{1-x}; \frac{1}{(1-x)^2}, \frac{x}{1-x}\right)$ of $a\mathcal{R}(2)$ is a pseudo-involution.
\end{proposition}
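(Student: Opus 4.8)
The plan is to verify the defining identity $\bar{I}M\bar{I}=M^{-1}$ directly, where $M$ is the matrix associated to the given $4$-tuple and $\bar{I}=\mathrm{diag}(1,-1,1,-1,\ldots)$. Write $(a,b;g,f)=\left(\frac{1+x}{1-x},\frac{1+x}{1-x};\frac{1}{(1-x)^2},\frac{x}{1-x}\right)$. The columns $k\ge 2$ of $M$ are a copy of the Riordan array $(g,f)=\left(\frac{1}{(1-x)^2},\frac{x}{1-x}\right)$ shifted two rows down and two columns to the right, and by the inverse formula (1) the columns $k\ge 2$ of $M^{-1}$ are the corresponding shifted copy of $(g,f)^{-1}$. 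Since $(g,f)$ is a pseudo-involution in the Riordan group, the $(i,j)$ entry of $(g,f)^{-1}$ equals $(-1)^{i+j}$ times that of $(g,f)$, so $(M^{-1})_{n,k}=(-1)^{n+k}M_{n,k}=(\bar IM\bar I)_{n,k}$ for all $n,k\ge 2$, and hence the identity $\bar IM\bar I=M^{-1}$ is automatic on every column with $k\ge 2$. Restricted to columns $0$ and $1$ it amounts precisely to $a^{**}(x)=a(-x)$ and $b^{*}(x)=b(-x)$; since $a=b=\frac{1+x}{1-x}$ here, in both cases I must show the answer is $\frac{1-x}{1+x}$. This is the same reduction used in the first-order proofs above.

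First I would compute $b^{*}$ from (2), namely $b^{*}=\left(1;-\frac{1}{(1+x)^2},\frac{x}{1+x}\right)\cdot\frac{1+x}{1-x}$. With $h=\frac{1+x}{1-x}$, so $h_0=1$ and $\tilde{h}(x)=\frac{2}{1-x}$, the fundamental theorem for $a\mathcal{R}(1)$ gives
\[
b^{*}=1+x\left(-\frac{1}{(1+x)^2}\right)\frac{2}{1-\frac{x}{1+x}}=1-\frac{2x}{1+x}=\frac{1-x}{1+x},
\]
which is exactly $b(-x)$.

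Next I would compute $a^{**}$ from (3). Once $b^{*}$ is known this reads $a^{**}=\left(1,-\frac{1-x}{1+x};-\frac{1}{(1+x)^2},\frac{x}{1+x}\right)\cdot\frac{1+x}{1-x}$. Applying the fundamental theorem for $a\mathcal{R}(2)$ with $h=\frac{1+x}{1-x}=1+2x+2x^2+\cdots$, so that $h_0=1$, $h_1=2$ and $\tilde{\tilde{h}}(x)=\frac{h(x)-1-2x}{x^2}=\frac{2}{1-x}$, I obtain
\begin{align*}
a^{**}&=1+2x\left(-\frac{1-x}{1+x}\right)+x^2\left(-\frac{1}{(1+x)^2}\right)\frac{2}{1-\frac{x}{1+x}}\\
&=1-\frac{2x(1-x)}{1+x}-\frac{2x^2}{1+x}=1-\frac{2x}{1+x}=\frac{1-x}{1+x},
\end{align*}
which is $a(-x)$. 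Together with the remark on the columns $k\ge 2$, this establishes $\bar IM\bar I=M^{-1}$, proving the proposition.

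I do not expect a genuine obstacle here: the two displays are the same elementary partial-fraction simplifications already carried out above. The one point requiring care is the order of the two nested inverse computations — in formula (3) the element acting on $a$ carries $-b^{*}$ (not $-b$) in its second slot, so $b^{*}$ must be determined first — together with invoking the correct fundamental theorem ($a\mathcal{R}(1)$ for $b^{*}$, $a\mathcal{R}(2)$ for $a^{**}$) with the correct values of $h_0$, $h_1$, $\tilde h$ and $\tilde{\tilde h}$ at each stage.
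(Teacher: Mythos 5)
Your proposal is correct and follows essentially the same route as the paper: compute $b^{*}$ via formula (2) and the first-order fundamental theorem, then $a^{**}$ via formula (3) with $-b^{*}$ in the second slot and the second-order fundamental theorem, obtaining $\frac{1-x}{1+x}$ in both cases, which is exactly what the paper does. Your explicit remark that the columns $k\ge 2$ are handled by the pseudo-involution property of $\left(\frac{1}{(1-x)^2},\frac{x}{1-x}\right)$ in the Riordan group is a small but welcome addition that the paper leaves implicit at this point.
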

\begin{proof}
We have
\begin{align*}
b^*&=\left(1; -\frac{1}{(1-x)^2}, \frac{x}{1-x}\right)^{-1} \frac{1+x}{1-x}\\
&=\left(1; -\frac{(1+x)^2}, \frac{x}{1+x}\right)\frac{1+x}{1-x}\\
&=1-\frac{x}{(1+x)^2} \frac{2}{1-\frac{x}{1+x}}\\
&=1-\frac{2}{1+x}=\frac{1-x}{1+x},\end{align*}
which expands to give $1-2,2,-2,\ldots$.

\noindent We then obtain that
\begin{align*}
a^{**}&=\left(1, -\frac{1-x}{1+x}; - \frac{1}{(1+x)^2}, \frac{x}{1+x}\right) \frac{1+x}{1-x}\\
&=1-2x \frac{1-x}{1+x} - \frac{x^2}{(1+x)^2} \frac{2(1+x)}{1+x-x}\\
&=\frac{1-x}{1+x}.\end{align*}
\end{proof}

It is possible to extend this result to higher orders. For instance, we can consider the almost Riordan array of third order defined by $\left(\frac{1+x}{1-x},\frac{1+x}{1-x}, \frac{1+x}{1-x}; \frac{1}{(1-x)^2}, \frac{x}{1-x}\right)$. The corresponding matrix has
\begin{align*}
M_{n,k}&=[x^n] x^k \frac{1+x}{1-x}, \quad k < 3;\\
M_{n,k}&=[x^n] \frac{x^k}{(1-x)^{k-1}}, \quad k \ge 3,\end{align*}
and begins
$$\left(
\begin{array}{cccccccc}
 1 & 0 & 0 & 0 & 0 & 0 & 0 & 0 \\
 2 & 1 & 0 & 0 & 0 & 0 & 0 & 0 \\
 2 & 2 & 1 & 0 & 0 & 0 & 0 & 0 \\
 2 & 2 & 2 & 1 & 0 & 0 & 0 & 0 \\
 2 & 2 & 2 & 2 & 1 & 0 & 0 & 0 \\
 2 & 2 & 2 & 3 & 3 & 1 & 0 & 0 \\
 2 & 2 & 2 & 4 & 6 & 4 & 1 & 0 \\
 2 & 2 & 2 & 5 & 10 & 10 & 5 & 1 \\
\end{array}
\right).$$ The inverse of this matrix begins
$$\left(
\begin{array}{cccccccc}
 1 & 0 & 0 & 0 & 0 & 0 & 0 & 0 \\
 -2 & 1 & 0 & 0 & 0 & 0 & 0 & 0 \\
 2 & -2 & 1 & 0 & 0 & 0 & 0 & 0 \\
 -2 & 2 & -2 & 1 & 0 & 0 & 0 & 0 \\
 2 & -2 & 2 & -2 & 1 & 0 & 0 & 0 \\
 -2 & 2 & -2 & 3 & -3 & 1 & 0 & 0 \\
 2 & -2 & 2 & -4 & 6 & -4 & 1 & 0 \\
 -2 & 2 & -2 & 5 & -10 & 10 & -5 & 1 \\
\end{array}
\right).$$

\section{Involutions in the group of almost Riordan arrays}
Let $R=(g(x), f(x))$ be an ordinary Riordan array of order $2$, which means that $R^2=I$. We call such a Riordan array $R$  an \emph{involution}. Multiplying $R$ with itself, we get
\begin{eqnarray*}
R^2=I \Leftrightarrow (g(x), f(x)) (g(x), f(x))= (1,x),
\end{eqnarray*}
and
\begin{eqnarray} \label{invo}
(g(x)g(f(x)), f(f(x)))=(1,x),
\end{eqnarray}
which gives us $f(f(x))=x$, and $g(f(x))=\frac{1}{g(x)}$.

Searching for involutions among the almost Riordan matrices, we suppose that we have the involution $(a;g,f)$, so we want
\begin{equation*}
(a(x);g(x),f(x)) (a(x);g(x),f(x)) =(1;1,x)
\end{equation*}
which becomes
\begin{equation} \label{invoAlmost}
 ((a(x);g(x),f(x))a(x);g(x)g(f(x)),f(f(x)))=(1;1,x).
\end{equation}
The same conditions as in the case of the involutions of (\ref{invo}), are satisfied for the internal generating functions $g$ and $f$ of (\ref{invoAlmost}), while for the initial column on the left, we have that
\begin{eqnarray} \label{invoAlmost2}
(a(x);g(x),f(x))a(x) &=& a_0a(x)+xg(x) \frac{a(f(x))-a_0}{f(x)}\nonumber\\
&=& a_0a(x)+\frac{xg(x)a(f(x))}{f(x)}-\frac{a_0xg(x)}{f(x)} \nonumber\\
&=& a_0\left( a(x)-\frac{xg(x)}{f(x)} \right)+\frac{xg(x)a(f(x))}{f(x)},
\end{eqnarray}
\noindent which has to be equal to $1$. According to the definition of an almost Riordan array, the generating function of the initial column $a(x)$ is an $\mathbb{F}_0-$ power series, while $g(x) \in \mathcal{F}_0$, and $f(x) \in \mathcal{F}_1$.

A first result is the following.
\begin{proposition} If $(g(x), f(x))$ is an involution in $\mathcal{R}$, then $(1;g(x), f(x))$ is an involution in $a\mathcal{R}(1)$.
\end{proposition}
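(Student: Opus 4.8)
The plan is to compute the square $(1;g,f)\cdot(1;g,f)$ directly from the multiplication rule in $a\mathcal{R}(1)$ and check that it equals the identity $(1;1,x)$. By the product formula $(a;g,f)\cdot(b;u,v) = ((a;g,f)b;\, g\,u(f),\, v(f))$, applied with $a=b=1$, $u=g$, $v=f$, the Riordan component of $(1;g,f)^2$ is $(g(x)\,g(f(x)),\, f(f(x)))$. Since $(g,f)$ is assumed to be an involution in $\mathcal{R}$, equation (\ref{invo}) gives $f(f(x))=x$ and $g(f(x))=1/g(x)$; hence $g(x)g(f(x))=1$ and the Riordan part collapses to $(1,x)$.

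It then remains to verify the initial-column component, i.e.\ that $(1;g,f)\cdot 1 = 1$, where the $1$ being acted upon is the constant power series coming from the second factor. By the fundamental theorem of almost Riordan arrays, $(1;g,f)h(x) = h_0\cdot 1 + x\,g(x)\,\tilde h(f(x))$ with $\tilde h(x)=(h(x)-h_0)/x$. Taking $h(x)=1$ gives $h_0=1$ and $\tilde h\equiv 0$, so $(1;g,f)\cdot 1 = 1$. Combining the two components, $(1;g,f)^2=(1;1,x)$, the identity of $a\mathcal{R}(1)$, so $(1;g,f)$ is an involution. Equivalently, one may substitute $a(x)=1$ into equation (\ref{invoAlmost2}): the right-hand side becomes $1\cdot\bigl(1-\tfrac{xg}{f}\bigr)+\tfrac{xg}{f}=1$, while the interior conditions of (\ref{invoAlmost}) are already guaranteed by the hypothesis that $(g,f)$ is an involution.

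I do not anticipate any real obstacle here: the whole statement follows once the product rule and the fundamental theorem are invoked, and both components reduce immediately. The one point worth stating explicitly is why the action on the initial column is trivial, namely that the choice $a=1$ is constant, so that $\tilde a=0$ (equivalently, the term $x g\,\tilde a(f)$ vanishes); this is exactly what distinguishes $(1;g,f)$ from a general almost Riordan array built on the involution $(g,f)$.
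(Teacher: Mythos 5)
Your proof is correct and essentially the same as the paper's: the interior Riordan component is handled by the involution hypothesis exactly as in the paper's setup (\ref{invoAlmost}), and your verification that $(1;g,f)\cdot 1=1$ (via $\tilde h\equiv 0$, or equivalently the cancellation $1-\tfrac{xg}{f}+\tfrac{xg}{f}=1$ in (\ref{invoAlmost2})) is precisely the computation the paper gives. No issues.
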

\begin{proof}
We have $a(x)=1$ and thus
$$a_0\left( a(x)-\frac{xg(x)}{f(x)} \right)+\frac{xg(x)a(f(x))}{f(x)}=1-\frac{xg(x)}{f(x)}+\frac{xg(x)}{f(x)}=1,$$ as required.
\end{proof}
By choosing $a(x)=\frac{xg(x)}{f(x)}$, we get a power series in  $\mathcal{F}_0$, and Eq. (\ref{invoAlmost2}) becomes
\begin{eqnarray*}
a_0\left( a(x)-\frac{xg(x)}{f(x)} \right)+\frac{xg(x)a(f(x))}{f(x)} &=& \frac{xg(x)a(f(x))}{f(x)} \\
&=& \frac{xg(x)\cfrac{f(x)g(f(x))}{f(f(x))}}{f(x)}
\end{eqnarray*}
Applying the conditions of (\ref{invo}), we have that this is equal to $1$. Hence, we have proven the following proposition.
\begin{proposition} Let $(g(x),f(x))$ be an ordinary Riordan involution, then the almost Riordan array $(a(x);g(x),f(x))$ is also an involution if $a(x)=\frac{xg(x)}{f(x)}$.
\end{proposition}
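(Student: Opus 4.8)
The plan is to reduce everything to the single scalar identity (\ref{invoAlmost2}). Since $(g(x),f(x))$ is assumed to be an ordinary Riordan involution, the ``internal'' requirements $f(f(x))=x$ and $g(f(x))=\frac{1}{g(x)}$ coming from (\ref{invo}) are automatically in force, so the only thing left to verify is that the initial column of $(a;g,f)^2$ is $1$, i.e.\ that
$$a_0\left(a(x)-\frac{xg(x)}{f(x)}\right)+\frac{xg(x)a(f(x))}{f(x)}=1$$
for the particular choice $a(x)=\frac{xg(x)}{f(x)}$.

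First I would check that this $a(x)$ is a legitimate element of $\mathcal{F}_0$, so that $(a;g,f)$ really is an element of $a\mathcal{R}(1)$: because $f$ is the functional part of a Riordan involution we have $f(x)=f_1x+\cdots$ with $f_1^2=1$ (applying $f(f(x))=x$ at lowest order), hence $f_1\ne 0$, and likewise $g_0\ne 0$; therefore $xg(x)/f(x)$ is a well-defined power series of order $0$ with constant term $g_0/f_1$. With $a(x)=xg(x)/f(x)$ substituted, the first summand $a_0\bigl(a(x)-xg(x)/f(x)\bigr)$ is identically zero, and we are left with $\dfrac{xg(x)\,a(f(x))}{f(x)}$, exactly as in the computation preceding the statement.

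Next I would compute the composition: from $a(x)=\dfrac{xg(x)}{f(x)}$ we get $a(f(x))=\dfrac{f(x)\,g(f(x))}{f(f(x))}$, so the surviving term equals $\dfrac{xg(x)}{f(x)}\cdot\dfrac{f(x)\,g(f(x))}{f(f(x))}=\dfrac{x\,g(x)\,g(f(x))}{f(f(x))}$. Now applying the involution identities $f(f(x))=x$ and $g(f(x))=\frac{1}{g(x)}$ collapses this to $\dfrac{x\cdot g(x)\cdot g(x)^{-1}}{x}=1$, which is precisely the condition needed. Combined with the internal conditions, this shows $(a(x);g(x),f(x))\cdot(a(x);g(x),f(x))=(1;1,x)$, so $(a(x);g(x),f(x))$ is an involution.

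I do not anticipate a genuine obstacle: the argument is a short chain of substitutions. The only points that require a little care are (i) verifying that $xg(x)/f(x)$ has order $0$ so the $4$-tuple is admissible, and (ii) using the correct form (\ref{invoAlmost2}) of the initial column of the product, which follows from the fundamental theorem of almost Riordan arrays and has already been established above.
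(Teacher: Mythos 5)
Your proposal is correct and follows essentially the same route as the paper: substitute $a(x)=\frac{xg(x)}{f(x)}$ into the initial-column condition (\ref{invoAlmost2}), note the first summand vanishes, and collapse $\frac{xg(x)a(f(x))}{f(x)}$ to $1$ using $f(f(x))=x$ and $g(f(x))=\frac{1}{g(x)}$. Your added check that $xg(x)/f(x)$ lies in $\mathcal{F}_0$ is a minor (welcome) elaboration of what the paper states without proof.
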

\noindent For the Riordan family of subgroups which are solely defined by their second generating function $f$, $Y_f[r,s,p]$, we only need the condition $f=\bar{f}$, as the $g$ function depends on $f$  \cite{subgroups}.
\begin{corollary}
Let $\upsilon_f[\rho,\sigma,\pi]=\left( \left( \frac{f(x)}{x}\right)^{\rho}((f'(x))^{\sigma} \left( \frac{f(x)-1}{x-1}\right)^{\pi}),f(x)\right)$ be a Riordan involution, then the almost Riordan array $$\left( \left( \frac{f(x)}{x}\right)^{\rho-1}((f'(x))^{\sigma} \left( \frac{f(x)-1}{x-1}\right)^{\pi} ;\left( \frac{f(x)}{x}\right)^{\rho}((f'(x))^{\sigma} \left( \frac{f(x)-1}{x-1}\right)^{\pi}),f(x)\right)$$ is also an involution.
\end{corollary}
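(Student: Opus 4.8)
The plan is to obtain this statement as an immediate application of the preceding proposition, the only real content being to recognize that the displayed first component is exactly $\frac{x g(x)}{f(x)}$ for the relevant $g$.

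First I would set $g(x)=\left(\frac{f(x)}{x}\right)^{\rho}(f'(x))^{\sigma}\left(\frac{f(x)-1}{x-1}\right)^{\pi}$, so that $\upsilon_f[\rho,\sigma,\pi]=(g(x),f(x))$. By hypothesis this pair is a Riordan involution; as recalled just before the corollary (see \cite{subgroups}), for this family of subgroups the involution condition reduces to the single requirement $f=\bar f$, since the $g$-component is determined by $f$. In particular the identities $f(f(x))=x$ and $g(f(x))=1/g(x)$ hold, which are precisely what the previous proposition needs.

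Next I would apply that proposition with this $g$ and $f$, taking $a(x)=\frac{x g(x)}{f(x)}$. Substituting the explicit form of $g$ and cancelling one factor of $\frac{f(x)}{x}$ gives
$$a(x)=\left(\frac{f(x)}{x}\right)^{\rho-1}(f'(x))^{\sigma}\left(\frac{f(x)-1}{x-1}\right)^{\pi},$$
which is exactly the initial-column generating function appearing in the statement. The proposition then says that $(a(x);g(x),f(x))$ is an involution in $a\mathcal{R}(1)$, which is the claim.

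The one step that deserves a sentence of justification -- and the closest thing to an obstacle, although it is routine -- is checking that $a(x)$ really is an element of $\mathcal{F}_0$, i.e.\ has nonzero constant term, so that the $4$-tuple genuinely lies in $a\mathcal{R}(1)$ and the proposition is applicable. This holds because $\frac{f(x)}{x}$, $f'(x)$ and $\frac{f(x)-1}{x-1}$ each have a nonzero constant term (namely $f_1$, $f_1$ and $1$, with $f_1=\pm1$ since $f$ is an involution), hence each is a unit in $R[[x]]$ and an arbitrary integer power of it is again a well-defined power series; the constant term of $a(x)$ is then $f_1^{\,\rho-1+\sigma}=\pm1\ne0$. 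With that in hand nothing further is needed, as the algebraic identity reducing $(a(x);g(x),f(x))a(x)$ to $1$ under the involution conditions was already established in the proof of the preceding proposition.
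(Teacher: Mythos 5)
Your proposal is correct and follows exactly the route the paper intends: the corollary is stated there without separate proof precisely because it is the preceding proposition applied with $a(x)=\frac{xg(x)}{f(x)}$, which after cancelling one factor of $\frac{f(x)}{x}$ is the displayed first component. Your additional check that $a(x)\in\mathcal{F}_0$ (constant term $f_1^{\rho-1+\sigma}=\pm1$) is a sensible extra detail the paper leaves implicit.
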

\begin{example}
Let $f(x)=-\frac{x}{1+x}$, so $\upsilon_f[1,1,1]$ will be $\left( \frac{f(x)}{x} f'(x) \frac{f(x)-1}{x-1},f(x)\right)$, which corresponds to the matrix
$$\left( \frac{1+2x}{(1+x)^4(1-x)}, -\frac{x}{1+x}\right) = \left(
\begin{array}{cccccccc}
 1 & 0 & 0 & 0 & 0 & 0 & 0 & 0 \\
 -1 & -1 & 0 & 0 & 0 & 0 & 0 & 0 \\
 1 & 2 & 1 & 0 & 0 & 0 & 0 & 0 \\
 1 & -3 & -3 & -1 & 0 & 0 & 0 & 0 \\
 -4 & 2 & 6 & 4 & 1 & 0 & 0 & 0 \\
 10 & 2 & -8 & -10 & -5 & -1 & 0 & 0 \\
 -18 & -12 & 6 & 18 & 15 & 6 & 1 & 0 \\
 30 & 30 & 6 & -24 & -33 & -21 & -7 & -1 \\
\end{array}
\right),$$
\noindent which is an involution, as $f=\bar{f}$. Now, the almost Riordan matrix which contains $\upsilon_f[1,1,1]$ and it is also an involution will be
\begin{eqnarray*}
\left( f'(x) \frac{f(x)-1}{x-1} ;\frac{f(x)}{x} f'(x) \frac{f(x)-1}{x-1},f(x)\right) &=& \left( \frac{1+2x}{(1+x)^3(1-x)}, -\frac{1}{1+x}\right) \\
&=& \left(
\begin{array}{cccccccc}
 1 & 0 & 0 & 0 & 0 & 0 & 0 & 0 \\
 0 & 1 & 0 & 0 & 0 & 0 & 0 & 0 \\
 0 & -1 & -1 & 0 & 0 & 0 & 0 & 0 \\
 2 & 1 & 2 & 1 & 0 & 0 & 0 & 0 \\
 -3 & 1 & -3 & -3 & -1 & 0 & 0 & 0 \\
 6 & -4 & 2 & 6 & 4 & 1 & 0 & 0 \\
 -8 & 10 & 2 & -8 & -10 & -5 & -1 & 0 \\
 12 & -18 & -12 & 6 & 18 & 15 & 6 & 1 \\
\end{array}
\right)
\end{eqnarray*}
\end{example}

We note that in the general case, the almost Riordan array $\left(\frac{x g(x)}{f(x)}; g(x), f(x)\right)$ is in fact a Riordan array. It coincides with the Riordan array $\left(\frac{xg(x)}{f(x)}, f(x)\right)$. Iterating, we see that if $(g, f)$ is an involution in $\mathcal{R}$, then so is
$\left(\frac{x^n}{f(x)^n}g(x), f(x)\right)$.
\begin{corollary} Let $(g(x), f(x))$ be a Riordan involution. Then $\left(\left(\frac{x}{f(x)}\right)^n g(x), f(x)\right)$ is a Riordan involution. \end{corollary}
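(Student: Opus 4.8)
The plan is to verify directly that the array $\left(\left(\frac{x}{f(x)}\right)^{n}g(x),\,f(x)\right)$ squares to the identity $(1,x)$, using only the two conditions $f(f(x))=x$ and $g(f(x))=\frac{1}{g(x)}$ extracted in (\ref{invo}). Write $h(x)=\left(\frac{x}{f(x)}\right)^{n}g(x)$. Since $(g,f)$ is an invertible Riordan array over $\mathbb{Z}$ we have $g_{0}=\pm1$ and $f_{1}=\pm1$ (the latter also being forced by $f(f(x))=x$, which gives $f_{1}^{2}=1$), so $\frac{x}{f(x)}$ is a genuine power series of order $0$ and $h\in\mathcal{F}_{0}$; hence $(h,f)$ is a bona fide Riordan array. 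Its second component is unchanged, so the condition $f(f(x))=x$ is inherited for free, and it remains only to check the first-column condition $h(x)\,h(f(x))=1$.

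For that, substituting $x\mapsto f(x)$ and using $f(f(x))=x$ together with $g(f(x))=1/g(x)$ gives
\[
h(f(x))=\left(\frac{f(x)}{f(f(x))}\right)^{n}g(f(x))=\left(\frac{f(x)}{x}\right)^{n}\frac{1}{g(x)},
\]
and therefore
\[
h(x)\,h(f(x))=\left(\frac{x}{f(x)}\right)^{n}g(x)\cdot\left(\frac{f(x)}{x}\right)^{n}\frac{1}{g(x)}=1 .
\]
By (\ref{invo}) this is precisely the statement that $(h,f)^{2}=(1,x)$, i.e.\ that $(h,f)$ is a Riordan involution.

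Alternatively, and more in keeping with the remark about \emph{iterating} that precedes the corollary, one may argue by induction on $n$: the base case $n=0$ is the hypothesis, the case $n=1$ is the proposition (already recorded above, via the observation that $\left(\frac{xg(x)}{f(x)};g(x),f(x)\right)$ coincides with the Riordan array $\left(\frac{xg(x)}{f(x)},f(x)\right)$) that $\left(\frac{xg(x)}{f(x)},f(x)\right)$ is a Riordan involution whenever $(g,f)$ is, and the inductive step simply applies that proposition with $g$ replaced by the involution data $\left(\frac{x}{f(x)}\right)^{n-1}g(x)$. Either route is entirely routine; the only point that deserves a line of comment is the verification that $x/f(x)$ is invertible as a formal power series, which is immediate from $f_{1}=\pm1$. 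I do not anticipate any genuine obstacle.
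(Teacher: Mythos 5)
Your proof is correct. Your primary argument is a direct verification: setting $h(x)=\left(\frac{x}{f(x)}\right)^n g(x)$, you check the two involution conditions $f(f(x))=x$ and $h(x)h(f(x))=1$ from (\ref{invo}), the second being immediate since $h(f(x))=\left(\frac{f(x)}{x}\right)^n\frac{1}{g(x)}$. The paper instead obtains the corollary by iteration: it has just shown that $\left(\frac{xg(x)}{f(x)};g(x),f(x)\right)$ is an involution in $a\mathcal{R}(1)$ and coincides with the Riordan array $\left(\frac{xg(x)}{f(x)},f(x)\right)$, and then applies this repeatedly, multiplying by $\frac{x}{f(x)}$ once per step --- exactly the inductive alternative you sketch in your closing paragraph. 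So your secondary route is the paper's route, while your primary route is more self-contained: it does not rely on the almost-Riordan machinery at all and dispatches the statement in one substitution, at the cost of not explaining why the corollary sits naturally after the discussion of $\left(\frac{xg(x)}{f(x)};g(x),f(x)\right)$. Your remark that $x/f(x)$ is a unit because $f_1=\pm1$ (forced by $f(f(x))=x$) is a small point the paper leaves implicit, and it is worth having.
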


\section{Quasi-involutions in the group of almost Riordan arrays}

We consider the quasi-involution  \cite{quasi}
$$(g(x), xg(x))=\left(\frac{1-x^2-\sqrt{1-6x^2+x^4}}{2x^2}, \frac{1-x^2-\sqrt{1-6x^2+x^4}}{2x}\right).$$
The corresponding matrix begins
$$\left(
\begin{array}{ccccccccc}
 1 & 0 & 0 & 0 & 0 & 0 & 0 & 0 & 0 \\
 0 & 1 & 0 & 0 & 0 & 0 & 0 & 0 & 0 \\
 2 & 0 & 1 & 0 & 0 & 0 & 0 & 0 & 0 \\
 0 & 4 & 0 & 1 & 0 & 0 & 0 & 0 & 0 \\
 6 & 0 & 6 & 0 & 1 & 0 & 0 & 0 & 0 \\
 0 & 16 & 0 & 8 & 0 & 1 & 0 & 0 & 0 \\
 22 & 0 & 30 & 0 & 10 & 0 & 1 & 0 & 0 \\
 0 & 68 & 0 & 48 & 0 & 12 & 0 & 1 & 0 \\
 90 & 0 & 146 & 0 & 70 & 0 & 14 & 0 & 1 \\
\end{array}
\right).$$
The initial column of this matrix is given by the aerated large Schroeder numbers
$$1, 0, 2, 0, 6, 0, 22, 0, 90, 0, 394, 0, 1806, 0, 8558,\ldots.$$ The inverse of this matrix begins
$$\left(
\begin{array}{ccccccccc}
 1 & 0 & 0 & 0 & 0 & 0 & 0 & 0 & 0 \\
 0 & 1 & 0 & 0 & 0 & 0 & 0 & 0 & 0 \\
 -2 & 0 & 1 & 0 & 0 & 0 & 0 & 0 & 0 \\
 0 & -4 & 0 & 1 & 0 & 0 & 0 & 0 & 0 \\
 6 & 0 & -6 & 0 & 1 & 0 & 0 & 0 & 0 \\
 0 & 16 & 0 & -8 & 0 & 1 & 0 & 0 & 0 \\
 -22 & 0 & 30 & 0 & -10 & 0 & 1 & 0 & 0 \\
 0 & -68 & 0 & 48 & 0 & -12 & 0 & 1 & 0 \\
 90 & 0 & -146 & 0 & 70 & 0 & -14 & 0 & 1 \\
\end{array}
\right).$$
We say that when an aerated matrix has an inverse with the same elements as the original matrix, except that the signs change on alternate non-zero diagonals, then the matrix is a quasi-involution. A Riordan array $(g(x^2), xg(x^2))$ is a quasi-involution if its inverse is given by $(g(-x^2), xg(-x^2))$. This will be the case if and only if the $\Omega_{1,1}$-sequence of the array satisfies $\Omega_{1,1}(x)=\frac{1}{\Omega_{1,1}(-x)}$ \cite{quasi}. For the matrix above, we have $\Omega_{1,1}=\frac{1+x}{1-x}$.

\begin{proposition} If we replace the initial column of the Riordan quasi-involution matrix
$$\left(\frac{1-x^2-\sqrt{1-6x^2+x^4}}{2x^2}, \frac{1-x^2-\sqrt{1-6x^2+x^4}}{2x}\right)$$  with the sequence
$$1, 0, \alpha, 0, \beta, 0, \gamma, 0, \delta, 0,\epsilon, 0, \zeta, 0,\eta,\ldots$$
where $\alpha, \gamma, \epsilon, \eta, \ldots$ are arbitrary, and we have
$\beta=3 \gamma, \delta=-32 \alpha+7 \gamma, \zeta=1200 \alpha-224 \gamma+11 \epsilon,\ldots$,
then we get a quasi-involution in the group of almost Riordan arrays.
\end{proposition}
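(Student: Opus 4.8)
The plan is to recognise the matrix as a concrete element of $a\mathcal{R}(1)$, apply the inverse formula for almost Riordan arrays to reduce the claim to a single functional equation for the generating function of the replaced initial column, and then solve that equation coefficient by coefficient.

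Write $G$ for the large Schröder generating function, $G(x)=\frac{1-x-\sqrt{1-6x+x^{2}}}{2x}$, equivalently $G=1+xG+xG^{2}$, so that the Riordan quasi-involution in the statement is $(G(x^{2}),xG(x^{2}))$. Comparing with the defining relation $M_{n,k}=[x^{n-1}]gf^{k-1}$ for $n,k\ge 1$, one checks that replacing its initial column by $1,0,\alpha,0,\beta,0,\gamma,0,\delta,\dots$ produces the almost Riordan array $(a;g,f)$ with
$$g(x)=G(x^{2})^{2},\qquad f(x)=xG(x^{2}),\qquad a(x)=A(x^{2}),$$
where $A(u)=1+\alpha u+\beta u^{2}+\gamma u^{3}+\delta u^{4}+\epsilon u^{5}+\zeta u^{6}+\eta u^{7}+\cdots$. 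Two identities for $f$ will be used, both immediate from $G=1+xG+xG^{2}$: the compositional inverse is $\bar f(x)=xG(-x^{2})$, and therefore $g(\bar f(x))=G(\bar f(x)^{2})^{2}=\bigl(x/\bar f(x)\bigr)^{2}=1/G(-x^{2})^{2}$.

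Because $(G(x^{2}),xG(x^{2}))$ is a Riordan quasi-involution, the entries of $M$ and of $M^{-1}$ in columns $k\ge 1$ (and at $(0,0)$) already satisfy $M^{-1}_{n,k}=(-1)^{(n-k)/2}M_{n,k}$; this is exactly the cited result \cite{quasi}. Hence $M$ is a quasi-involution in $a\mathcal{R}(1)$ if and only if the initial column $a^{*}$ of $M^{-1}=(a^{*};1/g(\bar f),\bar f)$ equals $A(-x^{2})$. The inverse formula and the fundamental theorem for $a\mathcal{R}(1)$ give
$$a^{*}(x)=(1;-g,f)^{-1}a(x)=1-\frac{x}{g(\bar f(x))}\cdot\frac{a(\bar f(x))-1}{\bar f(x)},$$
and substituting the two identities for $f$ collapses the right-hand side to $a^{*}(x)=1+G(-x^{2})-G(-x^{2})\,A\bigl(x^{2}G(-x^{2})^{2}\bigr)$. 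Putting $u=-x^{2}$ and $\phi(u)=-uG(u)^{2}$, so that $x^{2}G(-x^{2})^{2}=\phi(u)$, the requirement $a^{*}(x)=A(-x^{2})=A(u)$ becomes the functional equation
$$A(u)+G(u)\,A(\phi(u))=1+G(u).$$

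Finally, solve this equation. From $G=1+uG+uG^{2}$ one gets $G(\phi(u))=1/G(u)$, so $\phi$ is a formal involution, $\phi\circ\phi=\mathrm{id}$. With $B=A-1$ the equation reads $B(u)+G(u)B(\phi(u))=0$; set $C(u)=B(u)+G(u)B(\phi(u))$, so the goal is $C\equiv 0$. Since $\phi(u)=-u+O(u^{2})$, in $[u^{n}]\bigl(G(u)B(\phi(u))\bigr)$ no coefficient $b_{m}$ with $m>n$ appears and the coefficient of $b_{n}$ is $(-1)^{n}[u^{0}]G(u)^{2n+1}=(-1)^{n}$; thus the order-$u^{n}$ equation says that $\bigl(1+(-1)^{n}\bigr)b_{n}$ is a fixed linear form in $b_{1},\dots,b_{n-1}$. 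For odd $n$ this places no constraint on $b_{n}$, so $\alpha=b_{1},\gamma=b_{3},\epsilon=b_{5},\dots$ stay free; for even $n$ it determines $b_{n}$, and eliminating the already-fixed lower even coefficients expresses $\beta=b_{2},\delta=b_{4},\zeta=b_{6},\dots$ as integer linear combinations of the free parameters — the order-$u^{2}$ equation gives $\beta=3\alpha$, and continuing through orders $u^{4}$ and $u^{6}$ gives $\delta=-32\alpha+7\gamma$ and $\zeta=1200\alpha-224\gamma+11\epsilon$. With these choices all even-order coefficients of $C$ vanish; on the other hand, applying $\phi$ and using $\phi\circ\phi=\mathrm{id}$ and $G\circ\phi=1/G$ yields $C(u)=G(u)C(\phi(u))$, so if $C\not\equiv 0$ its lowest term $c_{m}u^{m}$ would force $c_{m}=(-1)^{m}c_{m}$, hence $m$ even, which is impossible. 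Therefore $C\equiv 0$, $a^{*}(x)=A(-x^{2})$, and $M$ is a quasi-involution. The one real obstacle is this closing step — verifying that fixing only the even-indexed coefficients is enough, i.e.\ that the odd-order equations are automatically consistent; that is precisely what the involutivity $\phi\circ\phi=\mathrm{id}$ (equivalently, the quasi-involution symmetry of the underlying Riordan array) supplies. The remaining ingredients — the two identities for $\bar f$, the simplification of $a^{*}$, and the extraction of the numerical coefficients — are routine algebra with the Schröder functional equation.
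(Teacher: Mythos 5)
Your proof is correct, and it takes a genuinely different (and stronger) route than the paper. The paper's proof is essentially computational: it observes that the interior matrix is the quasi-involution $(g(x)^2,xg(x))$ and then simply asserts that the dependent entries $\beta,\delta,\zeta,\dots$ ``have been calculated iteratively'' to force the sign pattern of the initial column of the inverse, explicitly conceding that this argument is only valid for finite truncations. You instead reduce the whole claim to the single functional equation $A(u)+G(u)A(\phi(u))=1+G(u)$ with $\phi(u)=-uG(u)^2$, using the inverse formula for $a\mathcal{R}(1)$ together with the Schr\"oder identities $\bar f(x)=xG(-x^2)$ and $G\circ\phi=1/G$; the key extra ingredient, absent from the paper, is the symmetry $C(u)=G(u)C(\phi(u))$ coming from $\phi\circ\phi=\mathrm{id}$, which shows that once the even-order coefficient equations are satisfied (these recursively determine $\beta,\delta,\zeta,\dots$ and leave $\alpha,\gamma,\epsilon,\dots$ free), the odd-order equations hold automatically, so the conclusion is valid for the full infinite array, not just finite sections. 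Your derived relations check out against the Schr\"oder specialization $\alpha=2,\gamma=22,\epsilon=394$ (giving $\beta=6$, $\delta=90$, $\zeta=1806$); note that in doing so you have implicitly corrected a typo in the statement, which reads $\beta=3\gamma$ where it should read $\beta=3\alpha$, as the paper's own displayed matrix (entry $3\alpha$ in position $(4,0)$) confirms.
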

Note that when $\alpha=2, \gamma=22, \epsilon=394, \ldots$, we obtain the original matrix.
\begin{proof}
The ``interior matrix'' that begins
$$\left(
\begin{array}{cccccccc}
 1 & 0 & 0 & 0 & 0 & 0 & 0 & 0 \\
 0 & 1 & 0 & 0 & 0 & 0 & 0 & 0 \\
 4 & 0 & 1 & 0 & 0 & 0 & 0 & 0 \\
 0 & 6 & 0 & 1 & 0 & 0 & 0 & 0 \\
 16 & 0 & 8 & 0 & 1 & 0 & 0 & 0 \\
 0 & 30 & 0 & 10 & 0 & 1 & 0 & 0 \\
 68 & 0 & 48 & 0 & 12 & 0 & 1 & 0 \\
 0 & 146 & 0 & 70 & 0 & 14 & 0 & 1 \\
\end{array}
\right),$$ is the Riordan array $(g(x)^2, xg(x))$ which is known to be a quasi-involution since $(g(x), xg(x))$ is.
Thus the matrix above is an almost Riordan array of the first order.
We must now only show that the signs of the elements of the initial column of the inverse of the matrix described above agree with those of the original matrix, except for changes of sign on alternate non-zero positions. Now the elements above $\alpha, \gamma, \epsilon$ have been calculated iteratively to make this so.
\end{proof}
Thus we have
$$\left(
\begin{array}{ccccccccc}
 1 & 0 & 0 & 0 & 0 & 0 & 0 & 0 & 0 \\
 0 & 1 & 0 & 0 & 0 & 0 & 0 & 0 & 0 \\
 \alpha  & 0 & 1 & 0 & 0 & 0 & 0 & 0 & 0 \\
 0 & 4 & 0 & 1 & 0 & 0 & 0 & 0 & 0 \\
 3 \alpha  & 0 & 6 & 0 & 1 & 0 & 0 & 0 & 0 \\
 0 & 16 & 0 & 8 & 0 & 1 & 0 & 0 & 0 \\
 \gamma  & 0 & 30 & 0 & 10 & 0 & 1 & 0 & 0 \\
 0 & 68 & 0 & 48 & 0 & -12 & 0 & 1 & 0 \\
 7 \gamma -32 \alpha  & 0 & 146 & 0 & 70 & 0 & 14 & 0 & 1 \\
\end{array}
\right)^{-1}$$
$$=\left(
\begin{array}{ccccccccc}
 1 & 0 & 0 & 0 & 0 & 0 & 0 & 0 & 0 \\
 0 & 1 & 0 & 0 & 0 & 0 & 0 & 0 & 0 \\
 -\alpha  & 0 & 1 & 0 & 0 & 0 & 0 & 0 & 0 \\
 0 & -4 & 0 & 1 & 0 & 0 & 0 & 0 & 0 \\
 3 \alpha  & 0 & -6 & 0 & 1 & 0 & 0 & 0 & 0 \\
 0 & 16 & 0 & -8 & 0 & 1 & 0 & 0 & 0 \\
 -\gamma  & 0 & 30 & 0 & -10 & 0 & 1 & 0 & 0 \\
 0 & -68 & 0 & 48 & 0 & -12 & 0 & 1 & 0 \\
  7 \gamma -32 \alpha  & 0 & -146 & 0 & 70 & 0 & -14 & 0 & 1 \\
\end{array}
\right).$$
Note that this proof is valid only for finite Riordan arrays.
\begin{example} The Riordan array $$\left(\frac{1}{(1-4x^2)^{\frac{3}{2}}}, \frac{x}{\sqrt{1-4x^2}}\right),$$ which begins
$$\left(
\begin{array}{ccccccc}
 1 & 0 & 0 & 0 & 0 & 0 & 0 \\
 0 & 1 & 0 & 0 & 0 & 0 & 0 \\
 6 & 0 & 1 & 0 & 0 & 0 & 0 \\
 0 & 8 & 0 & 1 & 0 & 0 & 0 \\
 30 & 0 & 10 & 0 & 1 & 0 & 0 \\
 0 & 48 & 0 & 12 & 0 & 1 & 0 \\
 140 & 0 & 70 & 0 & 14 & 0 & 1 \\
\end{array}
\right),$$ is a quasi-involution. Its inverse begins
$$\left(
\begin{array}{ccccccc}
 1 & 0 & 0 & 0 & 0 & 0 & 0 \\
 0 & 1 & 0 & 0 & 0 & 0 & 0 \\
 -6 & 0 & 1 & 0 & 0 & 0 & 0 \\
 0 & -8 & 0 & 1 & 0 & 0 & 0 \\
 30 & 0 & -10 & 0 & 1 & 0 & 0 \\
 0 & 48 & 0 & -12 & 0 & 1 & 0 \\
 -140 & 0 & 70 & 0 & -14 & 0 & 1 \\
\end{array}
\right).$$
If we now adjoin the sequence that begins
$$1, 0,r,0,4r, 0, s,0,8s-64r, 0, t, 0,8t-64r, 0,u,0,4(768r-80s+3t),0,v,0,\ldots$$ to this matrix, to get a matrix that begins
$$\left(
\begin{array}{ccccccc}
 1 & 0 & 0 & 0 & 0 & 0 & 0 \\
 0 & 1 & 0 & 0 & 0 & 0 & 0 \\
 r & 0 & 1 & 0 & 0 & 0 & 0 \\
 0 & 6 & 0 & 1 & 0 & 0 & 0 \\
 4 r & 0 & 8 & 0 & 1 & 0 & 0 \\
 0 & 30 & 0 & 10 & 0 & 1 & 0 \\
 s & 0 & 48 & 0 & 12 & 0 & 1 \\
\end{array}
\right)$$ then we obtain an almost Riordan array which is a quasi-involution. The inverse then begins
$$\left(
\begin{array}{ccccccc}
 1 & 0 & 0 & 0 & 0 & 0 & 0 \\
 0 & 1 & 0 & 0 & 0 & 0 & 0 \\
 -r & 0 & 1 & 0 & 0 & 0 & 0 \\
 0 & -6 & 0 & 1 & 0 & 0 & 0 \\
 4 r & 0 & -8 & 0 & 1 & 0 & 0 \\
 0 & 30 & 0 & -10 & 0 & 1 & 0 \\
 -s & 0 & 48 & 0 & -12 & 0 & 1 \\
\end{array}
\right).$$
In effect, the terms $r, s, t, u, v,\ldots$ are calculated iteratively precisely to give the quasi-involution property. For instance, if we set the terms $r, s, t, \ldots$ to $1$ then we obtain an initial column
$$1, 0, 1, 0, 4, 0, 1, 0, -56, 0, 1, 0, 2764, 0, 1, 0, -250736, 0, 1,\ldots.$$

Note that alternatively, we could have started with the quasi-involution $$\left(\frac{1}{1-4x^2}, \frac{x}{\sqrt{1-4x^2}}\right)$$ which begins
$$\left(
\begin{array}{cccccccc}
 1 & 0 & 0 & 0 & 0 & 0 & 0 & 0 \\
 0 & 1 & 0 & 0 & 0 & 0 & 0 & 0 \\
 4 & 0 & 1 & 0 & 0 & 0 & 0 & 0 \\
 0 & 6 & 0 & 1 & 0 & 0 & 0 & 0 \\
 16 & 0 & 8 & 0 & 1 & 0 & 0 & 0 \\
 0 & 30 & 0 & 10 & 0 & 1 & 0 & 0 \\
 64 & 0 & 48 & 0 & 12 & 0 & 1 & 0 \\
 0 & 140 & 0 & 70 & 0 & 14 & 0 & 1 \\
\end{array}
\right).$$ Then we can  substitute the above sequence for the initial column.
\end{example}

\end{document}